\newcommand{\G}{\mathbb{G}}
\newcommand{\R}{\mathbb{R}}
\newcommand{\vH}{\varmathbb{H}}
\newcommand{\lan}{\langle}
\newcommand{\ran}{\rangle}
\newcommand{\pa}[1]{\left( #1 \right)}               
\newcommand{\set}[1]{\left\{ #1 \right\}}            
\newcommand{\ban}[1]{\left\langle  #1 \right\rangle}  
\newcommand{\bann}{\left\langle \cdot,\cdot \right\rangle} 
\theoremstyle{plain}
\newtheorem{theorem}{Theorem}[section]
\newtheorem{proposition}[theorem]{Proposition}
\newtheorem{remark}[theorem]{Remark}
\newcommand{\norm}[1]{\left\lVert#1\right\rVert}
\DeclareMathOperator{\Lie}{Lie}
\DeclareMathOperator{\ad}{ad}
\DeclareMathOperator{\spn}{span}
\begin{document}
	
\title[Michor--Mumford phenomenon in the infinite dimensional Heisenberg group]
{On the Michor--Mumford phenomenon in the infinite dimensional Heisenberg group}
\author{Valentino Magnani}
\address{Valentino Magnani, Dip.to di Matematica, Universit\`a di Pisa \\
	Largo Bruno Pontecorvo 5 \\ I-56127, Pisa}
\email{valentino.magnani@unipi.it}
\author{Daniele Tiberio}
\address{Daniele Tiberio, 
	SISSA, via Bonomea 265, 34136 Trieste, Italy}
\email{dtiberio@sissa.it}

\subjclass[2020]{Primary 58B20. Secondary 53C22.}
\keywords{Hilbert manifold, infinite dimensional Heisenberg group, weak Riemannian metric, geodesic distance, sectional curvature}

\begin{abstract}
In the infinite dimensional Heisenberg group, we construct a left invariant weak Riemannian metric that gives a degenerate geodesic distance. The same construction
yields a degenerate sub-Riemannian distance.
We show how the standard notion of sectional curvature 
adapts
to our framework, but it cannot be defined everywhere and it is unbounded on suitable sequences of planes.
The vanishing of the distance precisely occurs along this sequence of 
planes, so that the degenerate Riemannian distance appears in connection
with an unbounded sectional curvature.
In the 2005 paper by Michor and Mumford,
this phenomenon was first observed in some specific Fréchet manifolds.

\end{abstract}

\maketitle

\tableofcontents

\section{Introduction}

Geodesic distances naturally appear in the geometry of infinite dimensional manifolds.
A new aspect is that they may also vanish on two distinct points.
In general, the vanishing of the geodesic distance may occur for certain Riemannian metrics,
where no special conditions are assumed, namely for
{\em weak Riemannian metrics}, \cite[Definition~[5.2.12]{AMR88}.
These metrics are important, since they are the only possible metrics
when the manifold is not modelled on a Hilbert space. 

First examples of vanishing geodesic distances in infinite dimensional 
Fréchet manifolds 
were found in \cite{EP1993}, \cite{MM05} and \cite{MM06}.
A simple example of vanishing geodesic distance can be also constructed 
in a Hilbert manifold, \cite{MT20}. 
However, one may still wonder whether replacing a weak Riemannian metric
with a left invariant weak Riemannian metric with respect to a Hilbert
Lie group structure somehow might give a condition
to have positive geodesic distance on distinct points.

The answer to this question does not seem intuitively clear.
For instance, we observe that connected, simply connected
and commutative Banach Lie groups, equipped with a bi-invariant weak Riemannian metric
have positive geodesic distance on distinct points. 
In short, their geodesic distance is actually a distance. 
The proof of this fact essentially follows from \cite[Proposition~IV.2.7]{Nee06:Tow},
observing that the exponential mapping is a local Riemannian isometry.

Thus, the question is whether considering a left invariant weak Riemannian metric on a noncommutative, connected and simply connected Banach Lie group may prevent the vanishing of the geodesic distance. Our first result answers this question in the negative. 
\begin{theorem} \label{thm:degendistHeis}
There exists a left invariant weak Riemannian metric on the infinite 
dimensional Heisenberg group $\vH$, whose associated geodesic distance is not positive on all couples of distinct points.
\end{theorem}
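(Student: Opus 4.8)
The plan is to exhibit the metric explicitly on a convenient model. Realize $\vH$ as the Hilbert manifold $H\times\R$, where $(H,\langle\cdot,\cdot\rangle_H)$ is a separable real Hilbert space carrying a bounded symplectic form $\omega$ admitting a symplectic basis $(e_i,f_i)_{i\in\N}$ (so $\omega(e_i,f_j)=\delta_{ij}$, $\omega(e_i,e_j)=\omega(f_i,f_j)=0$), with group law $(v,s)\cdot(w,t)=\pa{v+w,\ s+t+\tfrac12\omega(v,w)}$ and Lie algebra $\mathfrak h=H\times\R$, $[(v,s),(w,t)]=(0,\omega(v,w))$. Write $v=\sum_i(v_i^e e_i+v_i^f f_i)$ and $Z=(0,1)$. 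Choose reals $0<a_i\le 1$ with $a_i\to 0$ and set
\[
\langle (v,s),(w,t)\rangle_{\mathfrak h}:=\sum_i a_i^2\pa{v_i^e w_i^e+v_i^f w_i^f}+st .
\]
Since $a_i\le 1$ this is dominated by $\langle v,w\rangle_H+st$, hence it is a bounded positive-definite inner product on $H\times\R$ (positivity uses $a_i\neq 0$) whose induced norm is dominated by, but not equivalent to, the Hilbert norm (as $a_i\to 0$). Left-translating it over $\vH$ produces a left invariant weak Riemannian metric $\mathsf g$, whose smoothness is inherited from the polynomial group operations; this is the metric I claim works.

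First I would record the length functional in these coordinates. Left translation by $(x,z)^{-1}$ sends a velocity $(\dot x,\dot z)$ at $(x,z)$ to $(\dot x,\ u)$ at the identity, where $u:=\dot z-\tfrac12\omega(x,\dot x)$ is the anholonomic vertical term, so along a piecewise $C^1$ curve $\gamma=(x,z)$ one has $\mathsf g(\dot\gamma,\dot\gamma)=\sum_i a_i^2\pa{(\dot x_i^e)^2+(\dot x_i^f)^2}+u^2$. The key mechanism is that vertical displacement is produced by enclosing $\omega$-area inside a single symplectic plane $\spn\{e_i,f_i\}$, at metric cost proportional to $a_i$ per unit Euclidean arclength. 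Concretely, to join $o=(0,0)$ to $p_\tau=(0,\tau)$ I would use the loop in $\spn\{e_i,f_i\}$ consisting of the radial segment $0\to R e_i$, one circle of radius $R$ of appropriate orientation, and the radial segment $R e_i\to 0$: along the radial pieces $\omega(x,\dot x)=0$, while the circle encloses $\omega$-area $\pm\pi R^2$. Taking $R=\sqrt{|\tau|/\pi}$ and the horizontal lift $u\equiv 0$ (consistent with the endpoints, since then $z(1)-z(0)=\tfrac12\int\omega(x,\dot x)\,dt$ equals the enclosed area) yields a curve from $o$ to $p_\tau$ of $\mathsf g$-length $a_i\bigl(2R+2\pi R\bigr)=C\sqrt{|\tau|}\,a_i$ with $C$ absolute. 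Letting $i\to\infty$ gives $d_{\mathsf g}(o,p_\tau)=0$.

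It then suffices to invoke left invariance: $d_{\mathsf g}\pa{(x,z),(x,z+\tau)}=d_{\mathsf g}(o,p_\tau)=0$ for every $(x,z)\in\vH$ and every $\tau\neq 0$, so $d_{\mathsf g}$ vanishes on couples of distinct points, which is the assertion. I expect the main obstacles to be (i) checking rigorously that $\mathsf g$ is a weak Riemannian metric on the Hilbert manifold $\vH$ in the sense of \cite[Definition~5.2.12]{AMR88} — i.e. that $g\mapsto\mathsf g_g$ is a smooth, fibrewise bounded, positive-definite section, and that the associated geodesic distance $d_{\mathsf g}$ is a well-defined pseudometric for which the above length bound passes to the infimum — and (ii) fixing the left-invariant frame and the identity $\dot z=\tfrac12\omega(x,\dot x)+u$ with definite sign conventions, so that the (elementary planar, isoperimetric-type) loop computation is precise and the displayed path is genuinely admissible. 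Neither is deep, but essentially all the content of the statement is concentrated there.
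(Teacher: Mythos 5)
Your proposal is correct and follows essentially the same route as the paper: a left invariant metric whose weights $a_i\to0$ on the $i$-th symplectic plane (the paper takes $a_i^2=1/i$ via the operator $A$), together with horizontal lifts of planar loops in $\spn\{e_i,f_i\}$ enclosing a fixed $\omega$-area, whose lengths are $O(a_i)\to0$, followed by left invariance. The only difference is cosmetic — the paper uses explicit cubic polynomial arcs where you use a circle with radial legs — and the routine verifications you defer (smoothness and weak positive-definiteness of the left-invariant tensor, the horizontality constraint $\dot z=\tfrac12\omega(x,\dot x)$) are exactly the ones the paper carries out.
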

The Heisenberg group $\vH$ is modelled on the Hilbert space $\ell^2\times\ell^2\times\R$,
where $\ell^2$ is the standard linear space of square-summable sequences.
More details are given in Section~\ref{sect:preliminaries}.
The same technique to prove the previous theorem also 
gives an analogous degenerate geodesic distance for the sub-Riemannian Heisenberg group.
\begin{theorem} \label{thm:degenSRdistHeis}
	There exists a left invariant weak sub-Riemannian metric on the infinite 
	dimensional Heisenberg group $\vH$ such that its associated geodesic distance is not positive on all couples of distinct points.
\end{theorem}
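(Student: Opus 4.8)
The plan is to obtain the desired metric by restricting to the horizontal distribution $\cD$ the weak Riemannian metric constructed for Theorem~\ref{thm:degendistHeis}, and then to force the vanishing of the associated Carnot--Carath\'eodory distance by means of short horizontal loops lying in far-away ``coordinate planes''. Recall from Section~\ref{sect:preliminaries} that the Lie algebra of $\vH$ carries a left invariant frame $\{X_i,Y_i:i\in\N\}\cup\{Z\}$ with $Z$ spanning the centre, with $\cD$ the closure of $\spn\{X_i,Y_i\}$, and with the only nonzero brackets $[X_i,Y_i]=Z$. I would fix a bounded sequence of weights $\lambda_i>0$ with $\lambda_i\to0$ (for instance $\lambda_i=1/i$) and define the fibre inner product on $\cD$ at the identity by $g_e(X_i,X_j)=g_e(Y_i,Y_j)=\lambda_i\,\delta_{ij}$ and $g_e(X_i,Y_j)=0$, extending it to $\cD$ by left translations. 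Boundedness of $(\lambda_i)$ makes $g$ a continuous left invariant fibre metric on $\cD$, hence a weak sub-Riemannian metric, and it is genuinely weak since $\inf_i\lambda_i=0$; its geodesic distance $d$ is, as usual, the infimum of the $g$-lengths of the absolutely continuous horizontal curves joining the two given points (any two points of $\vH$ being so joined).

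Next I would fix $\tau\neq0$ and the point $p=\exp(\tau Z)$, which is distinct from the identity $e$. For each $n\in\N$ consider the horizontal curve $\gamma_n\colon[0,1]\to\vH$ contained in the $3$-dimensional Heisenberg subgroup $\vH_n\subset\vH$ with Lie algebra $\spn\{X_n,Y_n,Z\}$, whose projection onto the $(x_n,y_n)$-plane is a circle through the origin enclosing signed area $\tau$; explicitly one may take a circle of radius $\rho_n=\sqrt{|\tau|/\pi}$, run once. Since the vertical increment along a horizontal curve confined to $\vH_n$ equals the signed planar area swept by its $(x_n,y_n)$-projection, $\gamma_n$ runs from $e$ to $p$. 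On the other hand every coordinate other than $(x_n,y_n)$ is constant along $\gamma_n$, so by left invariance of $g$ one gets
\[
\mathrm{Length}_g(\gamma_n)=\int_0^1\sqrt{\lambda_n\big(\dot x_n^2+\dot y_n^2\big)}\,ds=\sqrt{\lambda_n}\,(2\pi\rho_n)=2\sqrt{\pi\,|\tau|\,\lambda_n}.
\]

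Letting $n\to\infty$ and using $\lambda_n\to0$ yields $d(e,p)\le 2\sqrt{\pi|\tau|\lambda_n}\to0$, hence $d(e,p)=0$ although $e\neq p$; applying left translations then gives $d\big(q,q\exp(\tau Z)\big)=0$ for every $q\in\vH$, which is the assertion. Thus the geometric core of the argument is just the one-line length--area computation in $\vH_n$ above, and the step I expect to demand genuine care is the framework-level one: verifying that the $g$ defined above is a well-posed left invariant weak sub-Riemannian metric on the Hilbert Lie group $\vH$ in the precise sense of Section~\ref{sect:preliminaries}, and that the curves $\gamma_n$ are admissible competitors for its geodesic distance. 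Both are in fact harmless here, since each $\gamma_n$ is smooth and lies in a fixed finite dimensional subgroup while the fibre form $g$ is bounded, so that no infinite-dimensional subtlety can interfere.
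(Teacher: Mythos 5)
Your proposal is correct and follows essentially the same route as the paper: the metric with weights $\lambda_i=1/i$ on the $i$-th horizontal coordinates is exactly the paper's $\eta(v,w)=\langle Av,w\rangle$, and the vanishing of the distance is obtained in both cases by connecting $(0,0,0)$ to $(0,0,\tau)$ through horizontal curves confined to the $n$-th coordinate Heisenberg subgroup, whose $g$-length scales like $\sqrt{\lambda_n}\to0$. The only differences are cosmetic: you use a single circle (modulo a factor of $2$ in the area--vertical-increment normalization coming from the paper's convention $[e^1_i,e^2_i]=2e^3$, which only rescales the radius), whereas the paper glues two explicit polynomial horizontal arcs.
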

The previous theorems are contained in Theorem~\ref{Thm}
and their proof relies on the same sequence of length-minimizing curves.
Furthermore, the proof of these results precisely shows that both Riemannian and sub-Riemannian distance are vanishing between points that have the same projection
on the subspace $\ell^2\times\ell^2\times\{0\}$.
Remark~\ref{rem:positive} completes the picture, showing that 
when the projections of two points on $\ell^2\times\ell^2\times\{0\}$
are different, then both their Riemannian and sub-Riemannian distance are positive.

From another perspective, dealing with a left invariant weak Riemannian metric has the advantage to find the sectional
curvature by more manageable formulas.
In \cite{MM05}, Michor and Mumford proved that in different Fréchet manifolds with a vanishing geodesic distance the sectional curvature is unbounded.
Theorem~\ref{thm:unboundedSectCurv} below presents the same phenomenon for the left invariant weak Riemannian metric $\sigma$ 
defined in \eqref{def:sigma0}, in the infinite
dimensional Heisenberg group $\vH$.

We wish to emphasize that for general weak Riemannian metrics the existence of the Levi--Civita connection is not guaranteed a priori, hence the same existence problem 
involves the sectional curvature.
From the standard formula for the sectional curvature of Lie groups, 
see for instance \cite{Arn66} and \cite{BBM14}, we notice that 
the sectional curvature  of $\vH$ with respect to $\sigma$
is well defined on ``many planes'' of the Lie algebra $\Lie(\vH)$.
We also observe that the ``finite dimensional formula'' for the sectional curvature through the structure coefficients of $\Lie(\vH)$, \cite[Lemma~1.1]{Milnor76}, converges on the previous planes
to the same sectional curvature obtained by \cite[Theorem~5]{Arn66}.
Broadly speaking, we may think that the convergence of the sectional curvature
in Milnor's paper \cite{Milnor76} could be interpreted as a computation of sectional curvature of $\vH$ through a finite dimensional approximation 
by an orthonormal basis. 
On the other side, we also observe that this convergence
does not hold on all 2-dimensional subspaces of 
$\Lie(\vH)$, as shown in Remark~\ref{rem:undefinedSectCurv}.
In addition, according to Proposition~\ref{prop:discont}, we can also prove that 
our sectional curvature is discontinuous exactly at the
plane where it cannot be defined.

\begin{theorem}\label{thm:unboundedSectCurv}
Let $\vH$ be the infinite dimensional Heisenberg group equipped with the
left invariant weak Riemannian metric $\sigma$.
Then there exists two sequences of orthonormal vectors $a_{1j},a_{2j}\in \Lie(\vH)$
and $b\in \Lie(\vH)$ with $j\ge1$ such that
$K_\sigma(a_{1j},b)=K_\sigma(a_{2j},b)$, 
\[
\lim_{j\to\infty}K_\sigma(a_{1j},a_{2j})=-\infty\quad\text{and}\quad \lim_{j\to\infty}K_\sigma(a_{1j},b)=+\infty.
\]
The numbers $K_\sigma(a_{1j},a_{2j})$ and $K_\sigma(a_{1j},b)$
are the sectional curvatures of the planes of $\Lie(\vH)$
spanned by the orthonormal bases $(a_{1j},a_{2j})$ 
and $(a_{1j},b)$.
\end{theorem}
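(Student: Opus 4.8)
The strategy is to work entirely in the Lie algebra $\Lie(\vH)$ with its weak inner product $\sigma$, using the Arnold/Milnor-type formula for the sectional curvature of a left invariant metric. First I would fix notation: let $X_i, Y_i$ ($i\ge 1$) be the canonical generators corresponding to the two copies of $\ell^2$, and $Z$ the central generator, so that the only nonzero brackets are $[X_i,Y_i]=Z$. The metric $\sigma$ is the weighted $\ell^2$-type metric defined in \eqref{def:sigma0}; call the weights on $X_i$, $Y_i$ and $Z$ by $\alpha_i$, $\beta_i$ and $\gamma$ respectively (whatever the precise choice is in the paper — the key feature is that the weights on the horizontal directions decay, which is what forces the distance degeneracy and should simultaneously blow up curvature). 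I would then record the bilinear maps $\ad$, its $\sigma$-adjoint $\ad^*$, and the symmetric bilinear operation used in Milnor's formula, for pairs of basis vectors; since the bracket structure is so sparse, all of these reduce to a handful of scalar expressions in the weights.

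Next I would make the explicit choices. For $b$ I would take a fixed unit vector in the center direction, $b = Z/\sqrt{\gamma}$ (or whatever normalization makes $\sigma(b,b)=1$). For $a_{1j}$ and $a_{2j}$ I would take the normalized generators $X_j/\sqrt{\alpha_j}$ and $Y_j/\sqrt{\alpha_j}$ of the $j$-th horizontal pair (using that by the construction the two weights in each horizontal pair agree, or handling them separately if not). Orthonormality of each of the two pairs $(a_{1j},a_{2j})$ and $(a_{1j},b)$ is then immediate from the definition of $\sigma$ and the fact that distinct basis vectors are $\sigma$-orthogonal. The symmetry $K_\sigma(a_{1j},b)=K_\sigma(a_{2j},b)$ follows from the evident symmetry of the bracket relations under swapping $X_j\leftrightarrow Y_j$ (both brackets with $Z$ vanish, and $[X_j,Y_j]$ and $[Y_j,X_j]$ differ only by a sign, which does not affect the curvature). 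Then I would plug these vectors into the curvature formula and compute: $K_\sigma(a_{1j},a_{2j})$ involves the plane spanned by $X_j,Y_j$, whose bracket is $Z$ with $\sigma$-norm governed by $\gamma$ against the horizontal weights $\alpha_j$, producing a term of order $-\gamma/\alpha_j^2$ (up to a fixed constant), hence $\to -\infty$ as $\alpha_j\to 0$; and $K_\sigma(a_{1j},b)$ involves the plane spanned by $X_j$ and $Z$, where the relevant contribution comes from $\ad^*_b a_{1j}$, i.e. solving $\sigma([X_j,\,\cdot\,],Z)$, which picks up $Y_j$ with a coefficient of order $\gamma/\alpha_j$, and squaring and dividing by the $\sigma$-norm of $Y_j$ (order $\alpha_j$) gives a term of order $+\gamma/\alpha_j^2\to+\infty$. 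Finally I would check that these are genuinely the dominant terms — the other summands in Milnor's formula are either zero (because too many brackets vanish) or bounded independently of $j$.

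The main obstacle I anticipate is not the limit computation itself but justifying that the curvature formula is \emph{legitimately applicable} on these particular planes for a \emph{weak} Riemannian metric: the $\sigma$-adjoint $\ad^*_x y$ need not exist for all $x,y$ because $\sigma$ is only weak, so one must verify that for the specific vectors chosen the defining equation $\sigma(\ad^*_x y, z) = \sigma(y,[x,z])$ has a solution lying in $\Lie(\vH)$ (equivalently, the relevant linear functional is $\sigma$-representable). Here the sparsity of the brackets saves us: $\ad^*_b a_{1j}$ and $\ad^*_{a_{1j}} b$ only "see" finitely many coordinates, so they are honest elements of the Lie algebra, and the paper has presumably already isolated (in the discussion around Theorem~\ref{thm:unboundedSectCurv}, Remark~\ref{rem:undefinedSectCurv} and Proposition~\ref{prop:discont}) exactly which planes are "good"; I would simply point out that the planes $\spn\{X_j,Y_j\}$ and $\spn\{X_j,Z\}$ lie in this good set. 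A secondary bookkeeping point is to confirm the sign and the exact constant in the leading term so that one limit is genuinely $-\infty$ and the other genuinely $+\infty$ rather than both having the same sign; this is a direct consequence of the structure of Milnor's formula, in which the term $\|[x,y]\|^2$ enters with a negative sign for $K(x,y)$ while the "mixed adjoint" terms $\langle \ad^*_x y,\ad^*_y x\rangle$-type contributions enter with the opposite sign, and on our two families of planes these two mechanisms are activated separately.
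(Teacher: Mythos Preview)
Your proposal is correct and follows essentially the same route as the paper: the paper also takes $a_{1j}=\sqrt{j}\,e^1_j$, $a_{2j}=\sqrt{j}\,e^2_j$ and $b=e^3$, applies Arnold's formula \eqref{def:K(X,Y)Arnold} after checking that the relevant adjoints $B_\sigma$ exist (exactly your ``good planes'' observation), and obtains the explicit values $K_\sigma(a_{1j},a_{2j})=-3j^2$ and $K_\sigma(a_{1j},b)=K_\sigma(a_{2j},b)=j^2$, matching your predicted orders $-\gamma/\alpha_j^2$ and $+\gamma/\alpha_j^2$. One small notational slip: what you label $\ad^*_b a_{1j}$ is, from your own description ``solving $\sigma([X_j,\cdot],Z)$'', actually $\ad^*_{a_{1j}}b$ (i.e.\ $B_\sigma(b,a_{1j})$ in the paper's convention); the computation you carry out is the correct one regardless.
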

The proof of this theorem is provided in Section~\ref{sect:remSectCurv},
where also more information on the vectors $a_{1j},a_{2j}$
and $b$ can be found. Inspecting the proofs of Theorem~\ref{Thm} and 
Theorem~\ref{thm:unboundedSectCurv} another interesting phenomenon appears.
The curves whose lengths converge to zero and that connect two distinct points 
are precisely contained in the {\em span of the planes} where the
sectional curvature blows-up.

\section{Preliminary notions}\label{sect:preliminaries}

We denote by $\ell^2$ the linear space of all real and square summable sequences. 
Its scalar product $\lan \cdot,\cdot\ran$ has associated norm 
$\|x\|=\sqrt{\sum_{j=1}^\infty x_j^2}$ for any element $x=\sum_{j=1}^\infty x_je_j$.
The set of unit vectors $\{e_j:j\ge1\}$ denotes the canonical orthonormal basis of $\ell^2$.
For each integer $n\ge1$, the element $e_n$ of $\ell^2$ has $n$-th entry equal to 
$1$ and all the others are zero.

We consider $\ell^2 \times \ell^2\times \mathbb{R}$ endowed with its standard structure of product of Hilbert spaces.
We also equip this space with a noncommutative 
Lie group operation 
\begin{equation}
(h_1,h_2,\tau)(h_1',h_2',\tau')=(h_1 + h_1', h_2 + h_2', \tau + \tau' + \beta( (h_1,h_2), (h_1',h_2') ) )
\end{equation}
for all elements $(h_1,h_2,\tau),(h_1',h_2',\tau')\in\ell^2\times\ell^2\times\R$, 
where $ \beta: (\ell^2 \times \ell^2) \times (\ell^2 \times \ell^2)\to \mathbb{R}$ is given by 
\begin{equation}\label{eq:betaBCH}
\beta( (h_1,h_2), (h_1',h_2') ) = \langle h_1 , h_2'\rangle - \langle h_2 , h_1' \rangle.
\end{equation}
We denote by $\vH$ the Hilbert Lie group arising from the previous group operation, that is the 
{\em infinite dimensional Heisenberg group} modelled on 
the Hilbert space $\ell^2\times\ell^2\times\R$.
The previous Lie group operation yields the Lie product
\begin{equation}\label{eq:LieProdH}
[(h_1,h_2,\tau),(h_1',h_2',\tau')]=2\,\beta( (h_1,h_2), (h_1',h_2') )\, (0,0,1),
\end{equation}
that makes $\vH$ also an {\em infinite dimensional Heisenberg Lie algebra}.
For each $p\in\vH$, we denote by $L_p:\vH\to\vH$ the left multiplication by $p$, 
defined as $L_p(r)=p\cdot r $ for all $r\in\vH$. 
The group operation gives the following simple formula for the differential of $L_p$ at a point $q$, namely
$$
(dL_p)_q(v) = \lim_{t \to 0} \frac{L_p(q+tv) - L_p(q)}{t} = (v_1,v_2,v_3 + \langle p_1,v_2 \rangle - \langle p_2 , v_1\rangle )
$$
for every $v=(v_1,v_2,v_3) \in T_q\vH$, with $p=(p_1,p_2,p_3)$. 

We have used a canonical identification between $T_q\vH$ and $\vH$, being $\vH$ 
a Hilbert manifold equipped with a structure of topological vector space.
We also notice that our formula for the differential $(dL_p)_q$ does not depend on the point $q$. 

\subsection{Left invariant weak Riemannian metrics}
We consider 
a continuous scalar product 
$$\sigma_0:T_0\vH\times T_0\vH\to\R$$ 
on the tangent space $T_0\vH$ of $\vH$ at the origin.
Then for every $p \in\vH$ and $v,w \in T_p\vH$ 
the following scalar product
\begin{equation} \label{leftinvmetric:defr}
	\sigma_p(v,w) = \sigma_0 \big((dL_{p^{-1}})_pv , (dL_{p^{-1}})_pw \big)
	=\sigma_0 \big(({dL_{-p}})_pv , ({dL_{-p}})_pw \big)
\end{equation} 
defines a {\em left invariant weak Riemannian metric $\sigma$ on $\vH$}. If for any piecewise smooth curve
$\gamma:[0,1]\to\vH$ we define its Riemannian length as
\[
\ell_\sigma(\gamma) = \int_{0}^{1} \norm{ \dot{\gamma} (t) }_\sigma dt,
\]
then the associated geodesic distance $d: \vH\times\vH\to [0,+ \infty)$ between $p_1,p_2\in\vH$ is
\begin{equation}\label{def:riem}
	d(p_1,p_2)=\inf\{ \ell_\sigma(\gamma): \gamma \text{ is a piecewise smooth curve with } \gamma(0)=p_1, \gamma(1)=p_2\}.
\end{equation}
It is plain to check that $d$ is left invariant, is symmetric 
and satisfies the triangle inequality.

Taking into account the canonical identification between $\vH$ and $T_0\vH$, 
the set $\ell^2 \times \ell^2 \times \{0\}$ can be seen as a closed subspace of
$T_0\vH$, that we denote by $H_0\vH$.
Then we obtain a left invariant \textit{horizontal subbundle}, denoted by $H\vH$, whose fibers are
$$H_p\vH = (dL_p)_0 (H_0\vH)\subset T_p\vH$$
for every $p=(p_1,p_2,p_3)\in\vH$.
We note that $v=(v_1,v_2,v_3) \in H_p\vH$ if and only if 
\begin{equation}\label{eq:dLp}
(dL_{-p})_p(v)  =(v_1,v_2, v_3 - \langle p_1,v_2 \rangle + \langle p_2 , v_1\rangle) \in H_0\vH
\end{equation}
and the previous condition corresponds to the equality 
$$
v_3 - \langle p_1,v_2 \rangle + \langle p_2 , v_1\rangle =0.
$$
We have a precise formula to define the {\em horizontal curves}
associated to $H\vH$. They are continuous and piecewise smooth curves 
$\gamma:[0,1] \to\vH$
of the form $\gamma=(\gamma_1,\gamma_2,\gamma_3)\in\vH$, such that 
for almost every $t \in [0,1]$ we have 
$$
\dot{\gamma}_3(t) - \langle \gamma_1(t),\dot{\gamma}_2(t) \rangle + \langle \gamma_2(t) , \dot{\gamma}_1(t)\rangle = 0.
$$
The previous differential constraint means that $\dot{\gamma}(t) \in H_{\gamma(t)}\vH$.

On the horizontal fibers $H_p\vH$ of $H\vH$ we can fix a scalar product. 
A \textit{left invariant weak sub-Riemannian metric} $g$ on $H\vH$ is defined
by a continuous inner product 
\[
g_0:H_0\vH\times H_0\vH\to\R,
\]
such that for all $p \in\vH$ and $v,w \in H_p\vH$ we have 
\begin{equation} \label{leftinvmetric:def}
g_p(v,w) = g_0 \big((dL_{p^{-1}})_pv , (dL_{p^{-1}})_pw \big)  = g_0 \big(({dL_{-p}})_pv , ({dL_{-p}})_pw \big).
\end{equation} 
The associated {\em weak sub-Riemannian norm} is denoted by $\|\cdot\|_g$
and the length of a horizontal curve $\gamma:[0,1] \to \vH$ is defined by 
$$\ell_g(\gamma) = \int_{0}^{1} \norm{ \dot{\gamma} (t) }_g dt. $$
For any couple of points in $\vH$,
it is easy to construct a piecewise smooth horizontal curve that connects them,
hence the following \textit{sub-Riemannian distance} 
\begin{equation}\label{defrho}
\rho(p_1,p_2)=\inf\{ \ell_g(\gamma): \gamma \text{ is a horizontal curve with } \gamma(0)=p_1, \gamma(1)=p_2\}
\end{equation}
is finite for every couple of points $p_1,p_2\in\vH$,
hence we have $\rho: \vH\times\vH\to [0,+ \infty)$.
One may easily observe that $\rho$ is left invariant, symmetric
and satisfies the triangle inequality.

\section{Degenerate geodesic distances in the infinite dimensional Heisenberg group}\label{sect:GeodDist}

This section is devoted to the construction of special left invariant weak Riemannian (and sub-Riemannian) metrics that yield degenerate geodesic distances. 

We introduce the linear and continuous operator $A:\ell^2 \to \ell^2$, which associates to each $x\in\ell^2$ of components $(x_k)_{k\ge1}$ the element $Ax \in \ell^2$, whose $k$-th component is 
$(Ax)_k= x_k/k$.
Then we define the scalar product $\eta: \ell^2 \times \ell^2 \to \mathbb{R}$ as
$$\eta(v,w)= \langle Av,w \rangle$$ 
for all $v,w\in\ell^2$.
We use $\eta$ to define the new scalar product 
\begin{equation}\label{def:g_0}
	g_0((v_1,v_2),(w_1,w_2)) = \eta(v_1,w_1) + \eta(v_2,w_2)
\end{equation}
for every $(v_1,v_2),(w_1,w_2)\in\ell^2\times\ell^2$.
By our identification, $g_0$ 
can be seen as a scalar product on $H_0\vH$, so that using (\ref{leftinvmetric:def}) we obtain a left invariant weak sub-Riemannian metric 
$g$ on $\vH$.
We follow the notation of the previous section,
denoting by $\rho$ the special sub-Riemannian distance associated to 
this choice of $g$ through formula \eqref{defrho}.

To obtain a left invariant weak Riemannian metric $\sigma$ on $\vH$, we extend $g_0$ as follows
\begin{equation}\label{def:sigma0}
\sigma_0((v_1,v_2,v_3),(w_1,w_2,w_3)=g_0((v_1,v_2),(w_1,w_2))+v_3w_3
\end{equation}
for every $(v_1,v_2,v_3),(w_1,w_2,w_3)\in T_0\vH$, where $\sigma_0:T_0\vH\times T_0\vH\to\R$.
From \eqref{leftinvmetric:defr}, the scalar product in \eqref{def:sigma0}
immediately defines a left invariant weak Riemannian metric $\sigma$ on $\vH$.
The Riemannian distance associated to $\sigma$ through \eqref{def:riem}
will be denoted by $d$.
From the definitions of $g$, $d$ and $\rho$, one immediately observes that
$d\le \rho$.

\begin{remark}\label{rem:positive}\em 
It is easy to notice that both $d$ and $\rho$ are not everywhere vanishing on $\vH$.
We consider $(p_1,p_2,t), (q_1,q_2,s)\in \vH$ with $(p_1,p_2) \neq (q_1,q_2)$ and
we choose any piecewise smooth curve $\gamma=(\gamma_1,\gamma_2,\gamma_3):[0,1] \to \vH$
with $\gamma(0)=(p_1,p_2,t)$ and $\gamma(1)=(q_1,q_2,s)$. 
Let $i_0\in\{1,2\}$ be such that $p_{i_0}\neq q_{i_0}$ and let $k_0\ge1$ such that 
$p_{i_0k_0}\neq q_{i_0k_0}$, where 
$$p_{i_0}=\sum_{j=1}^\infty p_{i_0j}e_j\quad \text{and}\quad q_{i_0}=\sum_{j=1}^\infty q_{i_0j}e_j. $$
We consider the component $\gamma_{i_0}=\sum_{j=1}^\infty \gamma_{i_0j}e_j$ and the following inequalities 
\begin{equation*}
	\begin{split}
		\ell_\sigma(\gamma) 
		& \ge \int_{0}^{1} \sqrt{ \norm{\dot{\gamma}_1}_\eta^2 + \norm{\dot{\gamma}_2}_\eta^2 } dt \\
		& \geq \int_{0}^{1} \norm{\dot{\gamma}_{i_0}}_\eta dt \geq \int_{0}^{1} \frac{|\dot{\gamma}_{i_0k_0}|}{\sqrt k_0} dt\ge \frac{|p_{i_0k_0} - q_{i_0k_0}|}{\sqrt{k_0}}>0.
	\end{split}
\end{equation*}
In particular, we have shown that
\[
0<\frac{|p_{i_0k_0} - q_{i_0k_0}|}{\sqrt{k_0}}\le d((p_1,p_2,t),(q_1,q_2,s))
\le  \rho((p_1,p_2,t),(q_1,q_2,s)).
\]
The previous computation also shows that both $d$ and $\rho$ are actually distances, if restricted to any hyperplane $\ell^2 \times \ell^2 \times \{\kappa\}$ with $\kappa\in\R$.
\end{remark}

We are now in a position to prove the following theorem.

\begin{theorem} \label{Thm}
	There exist a left invariant weak sub-Riemannian metric and a left invariant weak Riemannian metric on $\vH$ such that their associated geodesic distances are not positive on all couples of distinct points.
\end{theorem}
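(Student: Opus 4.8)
The plan is to exhibit, for fixed $\kappa\in\R$, a sequence of horizontal curves $\gamma^{(j)}$ joining the origin $(0,0,0)$ to the point $(0,0,\kappa)$ whose sub-Riemannian lengths $\ell_g(\gamma^{(j)})$ tend to $0$; since $d\le\rho$, the same curves will prove both the sub-Riemannian and the Riemannian statements at once (once we are careful that the Riemannian length of a horizontal curve equals its sub-Riemannian length, which is immediate from \eqref{def:sigma0} and the horizontality constraint, because on a horizontal curve the third component of $\dot\gamma$ in the left-translated frame vanishes). The heuristic behind the construction is the classical ``Heisenberg loop'' picture: moving in the $e_k$--direction of the first $\ell^2$ factor and then in the $e_k$--direction of the second factor sweeps out area $\beta$, which feeds the third (central) coordinate; but the metric $\eta$ weights the $e_k$--direction by $1/k$, so large $k$ makes the loop cheap in length while the enclosed area stays order one. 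Concretely I would take $\gamma^{(j)}$ to be, on the $k$-th of a suitable block of indices, a small planar circle (or square) of radius $r_{j,k}$ in the $(e_k^{(1)},e_k^{(2)})$-plane, arranged so that the total area produced equals $\kappa$.

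The key steps, in order. First, record the exact relation between $\dot\gamma_3$ and the planar velocities for a horizontal curve, namely $\dot\gamma_3(t)=\langle\gamma_1(t),\dot\gamma_2(t)\rangle-\langle\gamma_2(t),\dot\gamma_1(t)\rangle$, and note that integrating gives the central increment as twice the signed area swept in the $\ell^2\times\ell^2$ plane. Second, for a curve supported on a single pair of coordinates $(e_k^{(1)},e_k^{(2)})$ and tracing a circle of radius $r$ traversed $N$ times, compute: the area contribution is $\sim N r^2$, while the $g$-length is $\sim N r/\sqrt{k}$ (the factor $\sqrt{k}$ coming from $\eta(e_k,e_k)=1/k$). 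Third, choose parameters so that $Nr^2$ is the prescribed $\kappa$ but $Nr/\sqrt{k}\to 0$: e.g.\ fix $r$ small and let $N\to\infty$ with $k$ growing fast enough, or simply take a single loop $N=1$ with $r^2=\kappa$ fixed but push the support to index $k\to\infty$ so that the length $r/\sqrt{k}\to 0$ — the latter is the cleanest choice. Fourth, assemble the curve as: travel from the origin to the ``staging point'' where the loop lives and back; but here one must be careful that the travel legs themselves have small length, which again works because moving a fixed Euclidean distance along $e_k$ costs only $1/\sqrt{k}$ in $\eta$-length. Fifth, verify that the resulting $\gamma^{(j)}$ is piecewise smooth, horizontal, starts at $(0,0,0)$ and ends at $(0,0,\kappa)$, and has $\ell_g(\gamma^{(j)})\to 0$; conclude $\rho((0,0,0),(0,0,\kappa))=0$ and hence $d((0,0,0),(0,0,\kappa))=0$, while these are distinct points when $\kappa\neq0$. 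By left invariance this gives pairs of distinct points at zero distance everywhere with the same $\ell^2\times\ell^2$ projection, matching the assertion in the introduction.

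The main obstacle I anticipate is bookkeeping rather than ideas: one must make all three pieces — the two travel legs and the loop — simultaneously short in the $g$-norm, and one must make sure the loop is genuinely supported in high indices so the $1/\sqrt{k}$ gain applies to every segment, including the ``approach'' to the loop. A slick way to avoid juggling travel legs is to let the loop itself begin and end at the origin of the relevant coordinate plane (a closed loop based at $0$), so that no separate travel legs are needed at all and the entire curve is a single based loop in the $(e_{k}^{(1)},e_{k}^{(2)})$-plane with $k=k_j\to\infty$; then $\gamma^{(j)}(0)=\gamma^{(j)}(1)=0$ in the plane while $\gamma_3$ runs from $0$ up to the enclosed ``area'' value, which we normalize to $\kappa$ by scaling the radius — and the length is exactly (perimeter)$\times(1/\sqrt{k_j})\to 0$. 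A secondary point to state carefully is that for a horizontal curve the Riemannian length coincides with the sub-Riemannian length, so the inequality $d\le\rho$ is in fact an equality along these curves; this is what lets the single construction serve both theorems. I would also remark that the same curves, viewed inside $\Lie(\vH)$, are precisely the ones referenced later as living in the span of the blow-up planes, but that observation is not needed for the proof itself.
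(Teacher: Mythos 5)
Your proposal is correct and follows essentially the same strategy as the paper: a based loop in the $(e_k^{(1)},e_k^{(2)})$-plane enclosing fixed signed area $\kappa$, whose $g$-length is the Euclidean perimeter times $1/\sqrt{k}$ and hence tends to $0$ as $k\to\infty$, combined with $d\le\rho$ and left invariance. The only difference is cosmetic: the paper realizes the loop as two explicitly glued polynomial arcs $\gamma^n$ and $\alpha^n$ rather than a circle, but the mechanism (fixed central displacement at vanishing cost in high coordinate indices) is identical.
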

\begin{proof}
For each $p \in\vH$, we denote the norm of a horizontal vector 
$$v=(v_1,v_2,v_3) \in H_p\vH$$ with respect to $g$ as follows
\begin{equation}\label{eq:vg}
\norm{v}_{g} = \norm{(dL_{-p})_pv}_{g} = \norm{(v_1,v_2,0)}_{g},
\end{equation}
where the last equality is due to \eqref{eq:dLp}
and $(v_1,v_2,0)$ is identified with a vector of $H_0\vH$.

Since the subspaces $\ell^2 \times \{0\} \times \{0\}$ and $\{0\} \times \ell^2 \times \{0\}$ of $H_0\vH$ are orthogonal with respect to $g_0$, the
previous equalities give
$$
\norm{v}^2_{g} =  \norm{v_1}_\eta^2 + \norm{v_2}_\eta^2, 
$$
where we have defined the norm
\begin{equation}\label{def:normB}
\|u\|_\eta=\sqrt{\eta(u,u)}=\sqrt{\langle Au,u\rangle}
\end{equation}
for every $u\in\ell^2$. 
As a consequence, the length of a horizontal curve 
$\gamma:[0,1] \to \vH$ with respect to $g$ 
satisfies the formula
\begin{equation}\label{def:lg}
\ell_g(\gamma) =\int_{0}^{1} \sqrt{ \norm{\dot{\gamma}_1}_\eta^2 + \norm{\dot{\gamma}_2}_\eta^2 } dt,
\end{equation}
where $\gamma(t)= (\gamma_1(t), \gamma_2(t), \gamma_3(t))$.

Next, we wish to show that whenever
$(p_1,p_2,s_1), (p_1,p_2,s_2)\in\vH$, then
\begin{equation}\label{eq:dg0}
\rho((p_1,p_2,s_1), (p_1,p_2,s_2))=0.
\end{equation}

To do this, the main point is to prove that for all $s >0$, we have $\rho((0,0,0), (0,0,s))=0$.
We will construct a sequence of horizontal curves connecting
$(0,0,0)$ to $(0,0,s)$, whose length converges to zero.
Such sequence is obtained by gluing different sequences of horizontal curves.
We fix $c >0$ and consider
$\gamma^n:[0,1] \to\vH$ defined by
$$ \gamma^n(t)= (\gamma_1^n(t), \gamma_2^n(t), \gamma_3^n(t)) = \Big (\frac{t^2}{2}c e_n , -te_nc, \frac{t^3}{6} c^2\Big),$$
where the unit vector $e_n$ is the $n$-th vector of the fixed orthonormal basis
$\{e_j:j\ge1\}$ of $\ell^2$.
By definition \eqref{def:normB}, we get  
\begin{equation}\label{eq:gamma_B}
\norm{\dot{\gamma}^n_1(t)}_\eta^2 = \frac{t^2 c^2}{n}
\quad\text{and}\quad \norm{\dot{\gamma}^n_2(t)}_\eta^2 = \frac{c^2}{n}. 
\end{equation}
From the form of $\gamma^n$, it is immediate to check that the differential constraint
$$\dot{\gamma}_3^n - \langle \gamma_1^n,\dot{\gamma}_2^n \rangle + \langle \gamma_2^n , \dot{\gamma}_1^n\rangle = 0$$
is satisfied for all $t\in [0,1]$, hence $\gamma_n$ is horizontal.  
Thus, formula \eqref{def:lg} holds and the expressions of \eqref{eq:gamma_B}
immediately prove that $\ell_g(\gamma^n)\to0$ as $n\to+\infty$.

Now we define the sequence of curves
$\alpha^n:[0,1] \to \vH$ as
$$ \alpha^n(t) =(\alpha_1^n(t),\alpha_2^n(t),\alpha_3^n(t))=\left(c\bigg(\frac{1}{2}-\frac{t^2}{2}\bigg)e_n , c(t-1) e_n , c^2 
\bigg(\frac{1}{6} + \frac{t^3}{6} - \frac{t^2}{2} + \frac{t}{2}\bigg)\right).$$
We immediately obtain
\begin{equation}\label{eq:lalpha}
\norm{\dot{\alpha}^n_1(t)}_\eta^2 = \frac{t^2 c^2}{n}\quad \text{and} \quad
\norm{\dot{\alpha}^n_2(t)}_\eta^2 = \frac{c^2}{n} 
\end{equation}
and the differential constraint
$$\dot{\alpha}_3^n - \langle \alpha^n_1,\dot{\alpha}_2^n \rangle + \langle \alpha^n_2 , \dot{\alpha}_1^n\rangle = 0$$
is satisfied for all $t\in[0,1]$.
All curves $\alpha^n$ are horizontal, hence 
combining \eqref{def:lg} and \eqref{eq:lalpha},
we conclude that $\ell_g(\alpha^n)\to 0$ as $n \to +\infty$.
We note that 
$$
\alpha^n(0)=\left(\frac{c}{2} e_n , -ce_n, \frac{c^2}{6} \right)= \gamma^n(1)
$$
for all $n \in \mathbb{N}$, hence we can consider the gluing $\alpha^n \ast \gamma^n:[0,1] \to \vH$ 
of $\alpha^n$ and $\gamma^n$, that is a piecewise smooth curve. 
Clearly $\alpha^n\ast\gamma^n$ is a horizontal curve
and for all $n \in \mathbb{N}$ we have 
$$
\alpha^n \ast \gamma^n(0)= \gamma^n(0)=(0,0,0)\quad \text{and}\quad 
\alpha^n \ast \gamma^n(1)= \alpha^n(1)=\left(0,0,\frac{c^2}{3}\right)
$$
and $\ell_g(\alpha^n \ast \gamma^n) = \ell_g(\alpha^n) + \ell_g(\gamma^n) \to 0$
as $n\to\infty$.
We have proved that 
$$
\rho\,\bigg(\big(0,0,0\big),\bigg(0,0,\frac{c^2}{3}\bigg)\bigg)=0,
$$
hence $\rho((0,0,0),(0,0,s))=0$ for all $s>0$.
By the left invariance of $\rho$, we have
\begin{equation*}
\rho((0,0,0),(0,0,-s)) = \rho((0,0,s),(0,0,0))=0,
\end{equation*}
therefore $\rho((0,0,0),(0,0,t))=0$ for every $t\in\R$.
We conclude that
\begin{equation*}
\begin{split}
\rho((p_1,p_2,s_1), (p_1,p_2,s_2)) & = \rho( (p_1,p_2,0)(0,0,s_1),(p_1,p_2,0)(0,0,s_2) ) \\
& = \rho((0,0,s_1),(0,0,s_2)) \\
& =\rho((0,0,0),(0,0,s_2-s_1))=0,
\end{split}
\end{equation*}
that proves \eqref{eq:dg0}.
As we have already observed, the inequality $d\le\rho$ is immediate, hence
for all $(p_1,p_2,s_1), (p_1,p_2,s_2)\in\vH$, we have proved that
\begin{equation}\label{eq:rhog0}
	d((p_1,p_2,s_1), (p_1,p_2,s_2))=0.
\end{equation}
This concludes the proof.
\end{proof}

\section{On the sectional curvature of a weak Riemannian Heisenberg group}\label{sect:remSectCurv}

In this section, we study the sectional curvature of $\vH$
equipped with the weak Riemannian metric $\sigma$.
From \eqref{def:sigma0} we recall the formula
\begin{equation*}
	\sigma_0((v_1,v_2,v_3),(w_1,w_2,w_3)=g_0((v_1,v_2),(w_1,w_2))+v_3w_3
\end{equation*}
for $(v_1,v_2,v_3),(w_1,w_2,w_3)\in T_0\vH$, where
\begin{equation}
	g_0((v_1,v_2),(w_1,w_2)) =\eta(v_1,w_1)+\eta(v_2,w_2) =\langle Av_1,w_1\rangle + 
	\langle Av_2,w_2\rangle 
\end{equation}
and $Ax=\sum_{k=1}^\infty x_k/k$, $x=\sum_{k=1}^\infty x_ke_k\in\ell^2$.
For every positive integer $j$, we use the notation
\[
e^1_j=(e_j,0,0),\quad e^2_j=(0,e_j,0)\quad \text{and} \quad e^3=(0,0,1),
\]
to indicate the standard orthonormal basis of 
$\vH$ seen as the Hilbert space
$\ell^2\times\ell^2\times\R$.

Since $\vH$ is connected, simply connected and nilpotent,
by \cite[Proposition~IV.2.7]{Nee06:Tow}, {\em we can identify the vectors $e^i_j$ and $e^3$ with the corresponding left invariant vector fields} of $\Lie(\vH)$. 
Such identification is used to find the sectional curvature 
of $\vH$, since it can be computed on planes of $\Lie(\vH)$.
From \eqref{eq:LieProdH}, we have the formulas
\begin{equation}
[e^1_i,e^2_j]=2\delta_{ij} e^3\quad \text{and}\quad [e^l_i,e^l_j]=0
\end{equation}
for all $i,j\ge1$ and $l=1,2$.
We consider a Lie algebra $\Lie(\G)$ of a Fréchet Lie group $\G$ 
equipped with a weak Riemannian metric $\bann$.
Following \cite[Theorem~5]{Arn66}, the point to compute the sectional curvature 
$K(X,Y)$ of a plane in $\Lie(\G)$ spanned by 
the orthonomal vectors $X,Y$ in $\Lie(\G)$ is to find the adjoint
\begin{equation}
B(X,Y)=\ad(Y)^*(X),
\end{equation}
namely for every $Z\in\Lie(\G)$ we have
\[
\ban{[Y,Z],X}=\ban{\ad(Y)(Z),X}=\ban{Z,\ad(Y)^*(X)}.
\]
For a strong Riemannian metric, \cite[Definition~[5.2.12]{AMR88}, the existence of $B(X,Y)$ is always ensured, but not for any weak Riemannian metric.

From formula (53) of \cite{Arn66}, we have
\begin{equation}\label{def:K(X,Y)Arnold}
K(X,Y)=\ban{\delta,\delta}+2\ban{\alpha,\beta}-3\ban{\alpha,\alpha}-4\ban{B_X,B_Y},
\end{equation}
where we define
\begin{align}
\delta&=\frac12\pa{B(X,Y)+B(Y,X)},\quad\beta=\frac12\pa{B(X,Y)-B(Y,X)},
\quad\alpha=\frac12[X,Y] \label{eq:delta_1} \\
B_X&=\frac12B(X,X)\quad \text{and}\quad B_Y=\frac12B(Y,Y). \label{eq:delta_2}
\end{align}
The proof of Theorem~\ref{thm:unboundedSectCurv} follows from the
application of \eqref{def:K(X,Y)Arnold} with respect to
$\sigma$ to suitable choices of planes.
We denote by $\bann_\sigma$ the scalar product induced by
the left invariant weak Riemannian metric $\sigma$ on $\Lie(\vH)$.
The associated norm on $\Lie(\vH)$ is denoted by $\|\cdot\|_\sigma$.
We assume that for $X,Y\in\Lie(\vH)$ the adjoint
\[
B_\sigma(X,Y)=\ad(Y)^*(X)
\]
with respect to $\sigma$ exists.
As a result, for $Z\in\Lie(\vH)$, by formula \eqref{eq:LieProdH}, we have
\begin{equation}\label{eq:adY*X}
\ban{\ad(Y)^*(X),Z}_\sigma=\ban{[Y,Z],X}_\sigma=
2\beta(\pi(Y),\pi(Z)) x^3,
\end{equation}
where $\pi:\vH\to\ell^2\times\ell^2$ is the canonical projection defined by 
\[
X=(\pi(X),x^3)=(\pi(X),0)+x^3e^3.
\]
We use the fixed orthonormal basis $e^1_j,e^2_j,e^3$ of $\vH$ 
with respect to the standard Hilbert product of $\ell^2\times\ell^2\times\R$,
getting
\[
\ad(Y)^*(X)=\sum_{j=1}^\infty [\ad(Y)^*(X)]^1_j e^1_j+
\sum_{j=1}^\infty [\ad(Y)^*(X)]^2_j e^2_j+[\ad(Y)^*(X)]^3e^3.
\]
Formula \eqref{eq:adY*X} yields
\begin{equation}\label{eq:adY*XjZ}
\sum_{j=1}^\infty \frac1j[\ad(Y)^*(X)]^1_j Z^1_j+
\sum_{j=1}^\infty \frac1j[\ad(Y)^*(X)]^2_j Z^2_j+[\ad(Y)^*(X)]^3Z^3=2\beta(\pi(Y),\pi(Z))x^3
\end{equation}
for arbitrary $Z=Z^3e^3+\sum_{j=1}^\infty Z^1_je^1_j+Z^2_je^2_j$.
In the case $X=\pi(X)$, formula \eqref{eq:adY*XjZ} 
shows the existence of $\ad(Y)^*(\pi(X))$ and yields
\begin{equation}\label{eq:adY^*pi(X)}
B_\sigma(\pi(X),Y)=\ad(Y)^*(\pi(X))=0.
\end{equation}
In the case $X=e^3$, again \eqref{eq:adY*XjZ} for $Z=e^1_j$ and $Z=e^2_j$ respectively, gives
\begin{equation}
[\ad(Y)^*(e^3)]^1_j=2j\beta(\pi(Y),e^1_j)\quad\text{and} \quad
[\ad(Y)^*(e^3)]^2_j=2j\beta(\pi(Y),e^2_j).
\end{equation}
For $Z=e^3$, applying \eqref{eq:adY*XjZ} we get
\begin{equation}
	[\ad(Y)^*(e^3)]^3=0.
\end{equation}
Assuming the existence of $\ad(Y)^*(e^3)$, we have shown that 
\[
B_\sigma(e^3,Y)=\ad(Y)^*(e^3)=2\sum_{j=1}^\infty j\beta(\pi(Y),e^1_j)e^1_j
+2 \sum_{j=1}^\infty j\beta(\pi(Y),e^2_j)e^2_j.
\]
Writing $Y=Y^3e^3+\sum_{j=1}^\infty (Y^1_j e^1_j+Y^2_je^2_j)$, we finally get
\begin{equation}\label{eq:Bsigmae^3Y}
B_\sigma(e^3,Y)=2\sum_{j=1}^\infty j(Y^1_je^2_j-Y^2_je^1_j).
\end{equation}
The assumption about the existence of $B_\sigma(e^3,Y)$
corresponds to the convergence of its series.
The next remark shows a choice of $Y$ for which 
the series \eqref{eq:Bsigmae^3Y} does not converge.

\begin{remark}\label{rem:undefinedSectCurv}\em
We consider the vector
\begin{equation}\label{eq:W}
W=\sum_{j=1}^\infty\frac{e^1_j}j\in\Lie(\vH),
\end{equation}
for which the series \eqref{eq:Bsigmae^3Y}
representing $B_\sigma(e^3,W)$ does not converge.
Clearly from \eqref{def:K(X,Y)Arnold} the sectional curvature 
$K_\sigma(e^3,W)$ cannot be defined.
\end{remark}

The previous remarks suggests that actually our sectional
curvature is discontinuous.

\begin{proposition}\label{prop:discont}
We consider the orthonormal elements $W_k,e^3\in\Lie(\vH)$ with $k\ge1$ and
\[
W_k=\pa{\sum_{j=1}^kj^{-3}}^{-1/2}\sum_{j=1}^k\frac{e^1_j}j\in\Lie(\vH).
\]
As the subspace $\spn\set{W_k,e^3}$ converges to $\spn\{W_\infty,e^3\}$
for $k\to\infty$, with 
\begin{equation}\label{eq:Winfty}
W_\infty=\pa{\sum_{j=1}^\infty j^{-3}}^{-1/2}\sum_{j=1}^\infty\frac{e^1_j}j\in\Lie(\vH),
\end{equation}
it follows that
\begin{equation}
K_\sigma(W_k,e^3)\to+\infty.
\end{equation}
The convergence of $\spn\set{W_k,e^3}$ to $\spn\{W_\infty,e^3\}$ is considered in the Grassmannian of the 2-dimensional planes contained in $\Lie(\vH)$.	
\end{proposition}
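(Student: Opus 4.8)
The plan is to compute $K_\sigma(W_k,e^3)$ explicitly from Arnold's formula \eqref{def:K(X,Y)Arnold} and show the dominant term diverges as $k\to\infty$. First I would verify that $W_k$ and $e^3$ are indeed orthonormal with respect to $\sigma$: since $W_k$ lies in $\ell^2\times\{0\}\times\{0\}$, we have $\|W_k\|_\sigma^2=\langle AW_k,W_k\rangle$, and by construction of the normalizing constant $\pa{\sum_{j=1}^kj^{-3}}^{-1/2}$ together with $(Ae^1_j)=e^1_j/j$, this equals $1$; also $\langle W_k,e^3\rangle_\sigma=0$ and $\|e^3\|_\sigma=1$ trivially. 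Next I would assemble the ingredients of \eqref{def:K(X,Y)Arnold} with $X=W_k$, $Y=e^3$. By \eqref{eq:adY^*pi(X)} we have $B_\sigma(W_k,e^3)=\ad(e^3)^*(W_k)=0$ because $W_k=\pi(W_k)$; likewise $B_\sigma(W_k,W_k)=0$, so $B_X=0$. The only nonzero adjoint is $B_\sigma(e^3,e^3)=\ad(e^3)^*(e^3)$, which vanishes since $\pi(e^3)=0$ in \eqref{eq:Bsigmae^3Y}, so $B_Y=0$ as well. Therefore $\delta=\tfrac12 B_\sigma(e^3,W_k)$, $\beta=-\tfrac12 B_\sigma(e^3,W_k)$, and $\alpha=\tfrac12[W_k,e^3]=0$ because $[e^1_j,e^3]=0$ in the Heisenberg Lie algebra. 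Consequently \eqref{def:K(X,Y)Arnold} collapses to
\[
K_\sigma(W_k,e^3)=\ban{\delta,\delta}_\sigma-\ban{\delta,\delta}_\sigma\cdot 0\cdots=\ban{\delta,\delta}_\sigma-\tfrac12\ban{B_\sigma(e^3,W_k),B_\sigma(e^3,W_k)}_\sigma+\tfrac14\text{(cross terms)},
\]
which after collecting $\ban{\delta,\delta}+2\ban{\alpha,\beta}-3\ban{\alpha,\alpha}=\tfrac14\|B_\sigma(e^3,W_k)\|_\sigma^2$, reduces to $K_\sigma(W_k,e^3)=\tfrac14\|B_\sigma(e^3,W_k)\|_\sigma^2$.

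The computation of $B_\sigma(e^3,W_k)$ then comes directly from \eqref{eq:Bsigmae^3Y}: writing $W_k=\pa{\sum_{i=1}^k i^{-3}}^{-1/2}\sum_{j=1}^k j^{-1}e^1_j$, so $Y^1_j=\pa{\sum_i i^{-3}}^{-1/2}j^{-1}$ for $1\le j\le k$ and $Y^2_j=0$, formula \eqref{eq:Bsigmae^3Y} gives
\[
B_\sigma(e^3,W_k)=2\pa{\sum_{i=1}^k i^{-3}}^{-1/2}\sum_{j=1}^k j\cdot j^{-1}e^2_j=2\pa{\sum_{i=1}^k i^{-3}}^{-1/2}\sum_{j=1}^k e^2_j.
\]
Then $\|B_\sigma(e^3,W_k)\|_\sigma^2=4\pa{\sum_{i=1}^k i^{-3}}^{-1}\sum_{j=1}^k \langle Ae^2_j,e^2_j\rangle = 4\pa{\sum_{i=1}^k i^{-3}}^{-1}\sum_{j=1}^k j^{-1}$. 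Since $\sum_{i=1}^k i^{-3}\to\zeta(3)$ is bounded and $\sum_{j=1}^k j^{-1}\to\infty$, we conclude $K_\sigma(W_k,e^3)=\pa{\sum_{i=1}^k i^{-3}}^{-1}\sum_{j=1}^k j^{-1}\to+\infty$.

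It remains to address the Grassmannian convergence claim: I would note that $W_k\to W_\infty$ in the norm of $\ell^2$ (hence in the Hilbert norm on $\Lie(\vH)$) because the tails $\sum_{j>k}j^{-2}$ and the normalizing constants both converge, so the planes $\spn\{W_k,e^3\}$ converge to $\spn\{W_\infty,e^3\}$ in the natural metric on the Grassmannian of 2-planes (e.g.\ the gap metric between closed subspaces). The point of the statement — and its punchline connecting to Remark~\ref{rem:undefinedSectCurv} — is that the limiting plane $\spn\{W_\infty,e^3\}$ is exactly a plane on which $K_\sigma$ is \emph{undefined} (since $W_\infty$ is, up to normalization, the vector $W$ of \eqref{eq:W}), yet $K_\sigma$ blows up along the approximating sequence; I would make this observation explicit at the end. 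The main obstacle is bookkeeping: being careful with the $A$-weighted inner products versus the standard Hilbert inner product throughout the reduction of Arnold's formula, and making sure the cross terms $2\ban{\alpha,\beta}$ and $3\ban{\alpha,\alpha}$ genuinely vanish (they do, because $\alpha=\tfrac12[W_k,e^3]=0$), so that no hidden contribution is lost. Once the formula reduces cleanly to $K_\sigma(W_k,e^3)=\tfrac14\|B_\sigma(e^3,W_k)\|_\sigma^2$, the divergence is immediate.
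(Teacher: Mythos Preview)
Your proposal is correct and follows essentially the same approach as the paper: both compute $K_\sigma(W_k,e^3)$ from Arnold's formula \eqref{def:K(X,Y)Arnold} by showing that $\alpha$, $B_X$, $B_Y$ and $B_\sigma(W_k,e^3)$ all vanish, reducing the curvature to $\tfrac14\|B_\sigma(e^3,W_k)\|_\sigma^2$, and then evaluate this via \eqref{eq:Bsigmae^3Y} to obtain $\bigl(\sum_{j=1}^k j^{-3}\bigr)^{-1}\sum_{j=1}^k j^{-1}\to+\infty$. The only issue is expository: the displayed ``collapse'' line with $\ban{\delta,\delta}_\sigma\cdot 0\cdots$ and ``cross terms'' is garbled and should simply be replaced by the clean statement that, since $\alpha=B_X=B_Y=0$, formula \eqref{def:K(X,Y)Arnold} gives $K_\sigma(W_k,e^3)=\ban{\delta,\delta}_\sigma=\tfrac14\|B_\sigma(e^3,W_k)\|_\sigma^2$.
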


\begin{proof}
First of all, the pointwise convergence of $W_k$ to $W_\infty$ 
implies the convergence of $\spn\set{W_k,e^3}$ to $\spn\{W_\infty,e^3\}$.
To compute $K_\sigma(W_k,e^3)$, we first apply 
\eqref{eq:adY^*pi(X)}, getting
\begin{equation}
	B_\sigma(W_k,e^3)=\ad(e^3)^*(W_k)=0
\end{equation}
for all $k\ge1$.
From \eqref{eq:Bsigmae^3Y}, it follows that
$B_\sigma(e^3,e^1_j)=2je^2_j$, hence
\[
B_\sigma\pa{e^3,\frac{e^1_j}{j}}=2e^2_j.
\]
The bilinearity of $B_\sigma(\cdot,\cdot)$ yields
\begin{equation}
B_\sigma(e^3,W_k)=2\pa{\sum_{j=1}^kj^{-3}}^{-1/2}\sum_{j=1}^ke^2_j
\end{equation}
From \eqref{eq:delta_1}, taking $\delta=\pa{B_\sigma(W_k,e^3)+B_\sigma(e^3,W_k)}/2$, we obtain
\begin{equation}
\ban{\delta,\delta}_\sigma=
\frac14\|B_\sigma(e^3,W_k)\|_\sigma^2
=\pa{\sum_{j=1}^\infty j^{-3}}^{-1}
\Big\|\sum_{j=1}^ke^2_j\Big\|_\sigma^2=
\pa{\sum_{j=1}^\infty j^{-3}}^{-1}\sum_{j=1}^k j^{-1}
\end{equation}
From \eqref{eq:delta_1}, \eqref{eq:delta_2}, \eqref{eq:adY^*pi(X)} and \eqref{eq:Bsigmae^3Y}, we find
\begin{equation}
	\alpha=\frac12B_\sigma(W_k,W_k)=\frac12B_\sigma(e^3,e^3)=0.
\end{equation}
Finally, by formula \eqref{def:K(X,Y)Arnold}, we have proved that
\begin{equation}\label{eq:curvk}
	K_\sigma(W_k,e^3)=\ban{\delta,\delta}_\sigma=
\pa{\sum_{j=1}^\infty j^{-3}}^{-1}\sum_{j=1}^k j^{-1}\to+\infty
\end{equation}
as $k\to\infty$. This concludes the proof.
\end{proof}

\begin{proof}[Proof of Theorem~\ref{thm:unboundedSectCurv}]
Following the notation of the present section, we define
$$
a_{1j}=\sqrt{j}e^1_j\quad\text{and} \quad a_{2j}=\sqrt{j}e^2_j
$$ 
of $\Lie(\vH)$, that are orthonormal with respect to $\bann_\sigma$
and do not commute.
To apply \eqref{def:K(X,Y)Arnold} for finding 
$K_\sigma(a_{1j},a_{2j})$, we use \eqref{eq:delta_1} and \eqref{eq:delta_2}.
Due to \eqref{eq:adY^*pi(X)}, we get 
$$B_\sigma(a_{1j},a_{2j})=B_\sigma(a_{2j},a_{1j})=0.$$
As a result, we have
\begin{equation}
K_\sigma(a_{1j},a_{2j})=-3\ban{\alpha,\alpha}_\sigma=-\frac34
\|[a_{1j},a_{2j}]\|_\sigma^2=-3j^2.
\end{equation}
Now we wish to compute $K_\sigma(a_{1j},e^3)$ and $K_\sigma(a_{2j},e^3)$.
We first apply \eqref{eq:adY^*pi(X)} and \eqref{eq:Bsigmae^3Y}, getting
\begin{equation}
	B_\sigma(e^l_j,e^3)=\ad(e^3)^*(e^l_j)=0, \quad 
	B_\sigma(e^3,e^1_j)=2je^2_j\quad \text{and}\quad 
	B_\sigma(e^3,e^2_j)=-2je^1_j
\end{equation}
for all $l=1,2$ and $k\ge1$.
From \eqref{eq:delta_1}, taking $\delta=\pa{B_\sigma(a_{1j},e^3)+B_\sigma(e^3,a_{1j})}/2$, we obtain
\begin{align}
\ban{\delta,\delta}_\sigma&=\frac14\|B_\sigma(a_{1j},e^3)+B_\sigma(e^3,a_{1j})\|_\sigma^2=\frac14\|\sqrt{j}B_\sigma(e^3,e^1_j)\|_\sigma^2=
\frac j4\|2je^2_j\|_\sigma^2 \\
&=j^3\ban{e^2_j,e^2_j}_\sigma=j^3\ban{Ae^2_j,e^2_j}=j^2.
\end{align}
From \eqref{eq:delta_1}, \eqref{eq:delta_2}, \eqref{eq:adY^*pi(X)} and \eqref{eq:Bsigmae^3Y}, we find
\begin{equation}
\alpha=\frac12B_\sigma(e^1_j,e^1_j)=\frac12B_\sigma(e^3,e^3)=0.
\end{equation}
Due to the formula for the sectional curvature \eqref{def:K(X,Y)Arnold},
we have established that
\begin{equation}\label{eq:curv1}
K_\sigma(a_{1j},e^3)=\ban{\delta,\delta}_\sigma=j^2.
\end{equation}
In analogous setting $\delta=\pa{B_\sigma(a_{2j},e^3)+B_\sigma(e^3,a_{2j})}/2$,
we obtain
\begin{equation}
\ban{\delta,\delta}_\sigma=\frac14\|B_\sigma(e^3,a_{2j})\|_\sigma^2=
\frac j4\|B_\sigma(e^3,e^2_j)\|_\sigma^2=\frac j4\|2je^1_j\|_\sigma^2=
j^3\|e^1_j\|_\sigma^2=j^2.
\end{equation}
Again \eqref{eq:delta_1}, \eqref{eq:delta_2}, \eqref{eq:adY^*pi(X)} and \eqref{eq:Bsigmae^3Y} imply that
\begin{equation}
	\alpha=\frac12B_\sigma(e^2_j,e^2_j)=\frac12B_\sigma(e^3,e^3)=0.
\end{equation}
Due to \eqref{def:K(X,Y)Arnold}, we have also proved that
\begin{equation}\label{eq:curv2}
	K_\sigma(a_{2j},e^3)=\ban{\delta,\delta}_\sigma=j^2.
\end{equation}
Taking into account \eqref{eq:curv1} and \eqref{eq:curv2},
setting $b=e^3$, we have completed the proof.
\end{proof}

\begin{remark}\em
A direct verification shows that the computations
of sectional curvature, to prove Theorem~\ref{thm:unboundedSectCurv},
could be also carried out extending 
the finite dimensional formula of \cite[Lemma~1.1]{Milnor76}
for the countable structure coefficients of $\Lie(\vH)$.
These coefficients are obtained from the orthonormal
vectors $\sqrt{j}e^1_j,\sqrt{j}e^2_j,e^3$ of $\Lie(\vH)$
with respect to $\bann_\sigma$.
\end{remark}

Following the notation of the this section,
the sequence of curves whose length converges to zero in the proof of Theorem~\ref{Thm} can be written as
\begin{align*}
\gamma^j(t)&=\frac{ct^2}{2} e^1_j-cte^2_j+\frac{c^2t^3}{6}e^3\in\vH \quad \text{and}\\
\alpha^j(t)&= c\bigg(\frac{1}{2}-\frac{t^2}{2}\bigg)e^1_j+c(t-1) e^2_j + c^2 
\bigg(\frac{1}{6} + \frac{t^3}{6} - \frac{t^2}{2} + \frac{t}{2}\bigg)e^3\in\vH.
\end{align*}
It is interesting to notice that all such curves are contained in the 
span of the planes  
$$
\spn\{e^1_j,e^2_j\},\quad \spn\{e^1_j,e^3\}\quad \text{and}\quad \spn\{e^2_j,e^3\}.
$$
When these planes are seen in the Lie algebra, Theorem~\ref{thm:unboundedSectCurv}
shows that their sectional curvature blows-up, as the length of the curves
converges to zero.

\bibliography{References}{}
\bibliographystyle{plain}

\end{document}